\newtheorem{theorem}{Theorem}[section]
\newtheorem{corollary}[theorem]{Corollary}
\theoremstyle{definition}
\newtheorem{example}[theorem]{Example}
\begin{document}

\title[]{Strengthened Cauchy-Schwarz and H\"older inequalities}

\author{J. M. Aldaz}
\address{Departamento de Matem\'aticas, Facultad de Ciencias,
Universidad  Aut\'onoma de Madrid, 28049 Madrid, Spain.}
\email{jesus.munarriz@uam.es}

\thanks{2000 {\em Mathematical Subject Classification.} 26D99, 65M60}

\thanks{Keywords and phrases: Strengthened Cauchy-Schwarz, Strengthened H\"older inequality}

\thanks{Partially supported by Grant
MTM2006-13000-C03-03 of the D.G.I. of Spain.}






\maketitle

\begin{abstract} We present some identities related to the
Cauchy-Schwarz inequality in complex inner product spaces. 
A new proof of the basic result on the subject of Strengthened Cauchy-Schwarz
inequalities is derived using these identities. Also, an
analogous version of this result is given for 
Strengthened H\"older
inequalities. 
\end{abstract}

\markboth{J. M. Aldaz}{Cauchy-Schwarz inequality}

\section{Introduction}

In \cite{A}, the parallelogram identity
in a {\em real} inner product space, is rewritten in
Cauchy-Schwarz form 
(with the deviation from equality given as a function of the
angular distance  between
vectors)
thereby providing another proof
of the Cauchy-Schwarz inequality in the real case. The
first section of this note complements this result by
presenting related identities for  complex
inner product spaces, and thus
a proof of the Cauchy-Schwarz inequality  in the complex case. 

Of course, using angular distances
is equivalent to using angles. An advantage of the angular
distance is  that it makes sense in arbitrary normed spaces, in
addition to being simpler than the notion of angle. And in some
cases it may also be easier to compute. Angular distances
are used in Section 2
to give a proof of the basic theorem in the subject of
Strengthened Cauchy-Schwarz inequalities (Theorem \ref{scs} below).
We also point out that the result is valid not just for vector
subspaces, but also for cones. Strengthened Cauchy-Schwarz inequalities
are fundamental in the proofs of convergence of iterative, finite element methods in numerical
analysis, cf. for instance \cite{EiVa}. They have also been considered
in the context of wavelets, cf. for example \cite{DR}, \cite{DRRo1}, \cite{DRRo2}.

Finally, Section 3 presents a variant, for cones and in the H\"older case
when $1 < p < \infty$, of the basic theorem on 
Strengthened Cauchy-Schwarz inequalities, cf. Theorem \ref{sh}.

\section{Identities related to the  Cauchy-Schwarz inequality in complex
inner product spaces}

It is noted in \cite{A} that
in a {\em real} inner product space, the parallelogram identity
\begin{equation}\label{par}
\|x + y\|^2 + \|x-y\|^2 = 2 \|x\|^2 + 2\|y\|^2
\end{equation}
provides
 the following stability version of
 the Cauchy-Schwarz inequality, valid for non-zero vectors
 $x$ and $y$:
\begin{equation}\label{bonhilb}
 (x, y)  =
\|x\|\|y\|\left(1 - \frac12
\left\|\frac{x}{\|x\|}-\frac{y}{\|y\|}\right\|^2\right).
\end{equation}
Basically, this identity says that the size of $(x,y)$
is determined by the angular distance $\left\|\frac{x}{\|x\|}-\frac{y}{\|y\|}\right\|$ between $x$ and $y$.
In particular, $(x, y)  \le
\|x\|\|y\|$, with equality precisely when the angular distance is zero. 

In this section we present some complex variants of this identity, involving 
$(x,y)$ and also $|(x,y)|$; as a byproduct,
  the Cauchy-Schwarz inequality in the complex case is obtained.  Since different conventions 
appear in the literature, we
point out that in this paper $(x,y)$ is taken to be linear in the first argument and
conjugate linear in the second.

We systematically replace in the proofs nonzero vectors
$x$ and $y$ by unit vectors $u = x/\|x\|$ and $v = y/\|y\|$.

\begin{theorem}\label{l1}
For all nonzero vectors $x$ and $y$ in
a complex inner product space, we have
\begin{equation}\label{re}
\operatorname{Re} (x, y)  =
\|x\|\|y\|\left(1 - \frac12
\left\|\frac{x}{\|x\|}-\frac{y}{\|y\|}\right\|^2\right)
\end{equation}
  and  
\begin{equation}\label{im}
\operatorname{Im} (x, y)  =
\|x\|\|y\|\left(1 - \frac12
\left\|\frac{x}{\|x\|}-\frac{i y}{\|y\|}\right\|^2\right).
\end{equation}
\end{theorem}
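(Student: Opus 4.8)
The plan is to follow the normalization indicated just before the statement: set $u = x/\|x\|$ and $v = y/\|y\|$, so that $u$ and $v$ are unit vectors. Since $\|x\|$ and $\|y\|$ are positive reals, they factor out of the inner product, giving $(x,y) = \|x\|\,\overline{\|y\|}\,(u,v) = \|x\|\|y\|(u,v)$; consequently both sides of each identity are homogeneous of degree one in the pair $(\|x\|,\|y\|)$. It therefore suffices to establish the two identities in the normalized form $\operatorname{Re}(u,v) = 1 - \frac12\|u - v\|^2$ and $\operatorname{Im}(u,v) = 1 - \frac12\|u - iv\|^2$, and then to multiply through by $\|x\|\|y\|$ to recover \eqref{re} and \eqref{im}.

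For the real part I would expand the squared angular distance by sesquilinearity: $\|u-v\|^2 = (u-v,\,u-v) = \|u\|^2 + \|v\|^2 - (u,v) - (v,u)$. Using $\|u\| = \|v\| = 1$ together with conjugate symmetry $(v,u) = \overline{(u,v)}$, the two cross terms combine as $(u,v) + \overline{(u,v)} = 2\operatorname{Re}(u,v)$, so that $\|u-v\|^2 = 2 - 2\operatorname{Re}(u,v)$. Rearranging and multiplying by $\|x\|\|y\|$ yields \eqref{re}, and the case of equality in Cauchy–Schwarz is read off from $\|u-v\|^2 \ge 0$.

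For the imaginary part the computation is the same in spirit, but now a factor of $i$ must be carried through, and this is the only place I expect any chance for a sign slip. Expanding $\|u - iv\|^2 = (u - iv,\,u - iv)$ and applying the stated convention (linear in the first argument, conjugate linear in the second) gives $(u, iv) = \overline{i}\,(u,v) = -i(u,v)$ and $(iv, u) = i(v,u)$, while $(iv, iv) = i\overline{i}\,\|v\|^2 = 1$. Collecting terms produces $\|u - iv\|^2 = 2 + i(u,v) - i(v,u) = 2 + i\bigl[(u,v) - \overline{(u,v)}\bigr]$, and since $(u,v) - \overline{(u,v)} = 2i\operatorname{Im}(u,v)$ this simplifies to $2 - 2\operatorname{Im}(u,v)$. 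Rearranging gives \eqref{im}. The main obstacle is thus purely the bookkeeping of the conjugate-linear slot: ensuring the factors of $i$ land with the correct signs so that $i\cdot 2i = -2$ converts the imaginary part into the required coefficient. The complex Cauchy–Schwarz inequality then follows as a byproduct from the nonnegativity of $\|u-v\|^2$ and $\|u-iv\|^2$.
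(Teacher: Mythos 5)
Your proof of the two identities is correct, and it is essentially the paper's computation: the only difference is that you expand $\|u-v\|^2$ and $\|u-iv\|^2$ directly by sesquilinearity, while the paper expands the sums $\|u+v\|^2 = 2 + 2\operatorname{Re}(u,v)$ and $\|u+iv\|^2 = 2 + 2\operatorname{Im}(u,v)$ and then converts sums into differences via the parallelogram identity (\ref{par}). That detour is thematic rather than logical --- the point of the paper, following \cite{A}, is that the parallelogram identity itself, rewritten, yields these Cauchy--Schwarz-type identities --- and since (\ref{par}) is proved by exactly the expansion you perform, the two arguments are rearrangements of one another. Your sign bookkeeping in the conjugate-linear slot for the imaginary part is accurate.

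One caveat, concerning a side remark rather than the statement itself: your closing claim that the complex Cauchy--Schwarz inequality ``follows as a byproduct from the nonnegativity of $\|u-v\|^2$ and $\|u-iv\|^2$'' is not correct. Those nonnegativities only give $\operatorname{Re}(u,v)\le 1$ and $\operatorname{Im}(u,v)\le 1$ separately, hence only $|(u,v)|\le\sqrt{2}$, not $|(u,v)|\le 1$. To obtain the genuine Cauchy--Schwarz bound one must apply identity (\ref{re}) to a rotated vector $e^{-i\operatorname{Arg}(u,v)}u$, which is precisely what the paper does in (\ref{stabCauchy}) and the corollary following it. Likewise, reading the equality case off $\|u-v\|^2\ge 0$ characterizes equality in $\operatorname{Re}(x,y)\le\|x\|\|y\|$, not in $|(x,y)|\le\|x\|\|y\|$. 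None of this affects the validity of your proof of the theorem as stated.
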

\begin{proof} Let $\|u\| =\|v\| = 1$. From (\ref{par}) we obtain
 \begin{equation*}
4 - \|u-v\|^2
 =
 \|u + v\|^2
 =
 2 + (u,v) + (v, u)
  =
 2 + (u,v) + \overline{(u, v)}
 =
 2 + 2 \operatorname{Re} (u,v).
 \end{equation*}
 Thus,
$
\operatorname{Re} (u,v) = 1 - \frac12
\left\|u - v \right\|^2.
$
 The same argument, applied to $\|u + i v\|^2$, yields
$
\operatorname{Im} (u,v) = 1 - \frac12
\left\|u - i v \right\|^2.
$
\end{proof}

Writing $(x,y) = \operatorname{Re} (x,y)+ i \operatorname{Im} (x,y)$
we obtain the following

\begin{corollary}\label{complexCauchy}
For all nonzero vectors $x$ and $y$ in
a complex inner product space, we have
\begin{equation}\label{bonchilb1}
(x, y)
  =
\|x\|\|y\|\left(\left(1 - \frac12
\left\|\frac{x}{\|x\|}-\frac{y}{\|y\|}\right\|^2\right)
+
i \left(1 - \frac12
\left\|\frac{x}{\|x\|}-\frac{i y}{\|y\|}\right\|^2\right)\right).
\end{equation}
Thus,
\begin{equation}\label{bonchilb2}
|(x, y)|
  =
\|x\|\|y\|\sqrt{\left(1 - \frac12
\left\|\frac{x}{\|x\|}-\frac{y}{\|y\|}\right\|^2\right)^2
+
\left(1 - \frac12
\left\|\frac{x}{\|x\|}-\frac{i y}{\|y\|}\right\|^2\right)^2}.
\end{equation}
\end{corollary}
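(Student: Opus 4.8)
The plan is to read off both identities directly from Theorem \ref{l1}, which already furnishes closed forms for $\operatorname{Re}(x,y)$ and $\operatorname{Im}(x,y)$ in terms of the two angular distances $\left\|\frac{x}{\|x\|}-\frac{y}{\|y\|}\right\|$ and $\left\|\frac{x}{\|x\|}-\frac{i y}{\|y\|}\right\|$. Since $x$ and $y$ are nonzero, the scalar $\|x\|\|y\|$ is strictly positive, a fact that will matter only at the very last step.

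First I would prove (\ref{bonchilb1}) by writing $(x,y) = \operatorname{Re}(x,y) + i\operatorname{Im}(x,y)$ and substituting the right-hand sides of (\ref{re}) and (\ref{im}). Both summands carry the common factor $\|x\|\|y\|$; pulling it out in front reproduces (\ref{bonchilb1}) verbatim, with no further algebra required. For (\ref{bonchilb2}) I would invoke the elementary formula $|z| = \sqrt{(\operatorname{Re}z)^2 + (\operatorname{Im}z)^2}$ with $z=(x,y)$. Inserting the two identities of Theorem \ref{l1} produces a sum of two squares, each of which contains the factor $(\|x\|\|y\|)^2$; extracting this common factor from under the radical yields exactly the expression claimed in (\ref{bonchilb2}).

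There is no genuine obstacle here, since the result is an immediate corollary of Theorem \ref{l1}. The single point that warrants any care is the extraction of $\|x\|\|y\|$ from the square root in (\ref{bonchilb2}): one must use the positivity of $\|x\|\|y\|$ (guaranteed by the hypothesis that $x$ and $y$ are nonzero) to conclude that $\sqrt{(\|x\|\|y\|)^2} = \|x\|\|y\|$ with no sign ambiguity, so that the factor comes out cleanly and the stated equality holds.
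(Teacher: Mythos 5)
Your proof is correct and matches the paper's own approach exactly: the paper derives the corollary by writing $(x,y) = \operatorname{Re}(x,y) + i \operatorname{Im}(x,y)$ and substituting the identities of Theorem \ref{l1}, just as you do, with the modulus formula then giving (\ref{bonchilb2}). Your added remark about the positivity of $\|x\|\|y\|$ when extracting the factor from the square root is a fine (if minor) point of care.
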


Next we find some shorter expressions for $|(x,y)|$. 
Let $\operatorname{Arg} z$ denote the principal argument of $z\in \mathbb{C}$, $z \ne 0$. That
is, $0\le \operatorname{Arg} z < 2\pi$, and in polar coordinates, $z= e^{i\operatorname{Arg} z} r$.  We choose the principal argument for definiteness;
any other argument will do equally well.

\begin{theorem}   Let $x$ and $y$ 
be nonzero vectors in
a complex inner product space. Then, for every $\alpha\in\mathbb{R}$ we have
\begin{equation} \label{stabCauchy}
\|x\|\|y\|\left(1 - \frac12
\left\|\frac{e^{i\alpha}x}{\|x\|}-\frac{y}{\|y\|}\right\|^2\right)
\le 
|(x,y)|
 =
\|x\|\|y\|\left(1 - \frac12
\left\|\frac{e^{ - i \operatorname{Arg} (x,y)} x}{\|x\|}-\frac{y}{\|y\|}\right\|^2\right).
\end{equation}
\end{theorem}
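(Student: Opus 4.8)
The plan is to apply the already-established identity (\ref{re}) not to the pair $x,y$ but to the rotated pair $e^{i\alpha}x$ and $y$. Since $\|e^{i\alpha}x\| = \|x\|$, substituting $e^{i\alpha}x$ for $x$ in (\ref{re}) reproduces exactly the leftmost expression in (\ref{stabCauchy}), so that expression equals $\operatorname{Re}(e^{i\alpha}x, y)$. Using linearity of the inner product in the first argument, $(e^{i\alpha}x, y) = e^{i\alpha}(x,y)$, and hence the leftmost quantity in (\ref{stabCauchy}) is $\operatorname{Re}\big(e^{i\alpha}(x,y)\big)$.

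Next I would pass to polar coordinates. Writing $(x,y) = |(x,y)|\, e^{i\operatorname{Arg}(x,y)}$ gives $e^{i\alpha}(x,y) = |(x,y)|\, e^{i(\alpha + \operatorname{Arg}(x,y))}$, so that
\[
\operatorname{Re}\big(e^{i\alpha}(x,y)\big) = |(x,y)|\cos\big(\alpha + \operatorname{Arg}(x,y)\big).
\]
Combining this with the previous step, the leftmost expression of (\ref{stabCauchy}) is precisely $|(x,y)|\cos(\alpha + \operatorname{Arg}(x,y))$. The inequality then follows at once from $\cos\theta \le 1$, which yields $|(x,y)|\cos(\alpha + \operatorname{Arg}(x,y)) \le |(x,y)|$ for every $\alpha \in \mathbb{R}$. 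For the equality on the right of (\ref{stabCauchy}), I would specialize to $\alpha = -\operatorname{Arg}(x,y)$: this makes the cosine equal to $1$ and turns the bound into an identity, and it is exactly the value of $\alpha$ appearing in the right-hand side of (\ref{stabCauchy}).

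There is essentially no hard step once one thinks of rotating $x$ by the phase $e^{i\alpha}$; the only points requiring a little care are the homogeneity bookkeeping $\|e^{i\alpha}x\| = \|x\|$ that identifies the left side of (\ref{stabCauchy}) with $\operatorname{Re}(e^{i\alpha}x,y)$, and the observation that $\cos$ attains its maximum value $1$ exactly when $\alpha + \operatorname{Arg}(x,y) \equiv 0$, i.e. at $\alpha = -\operatorname{Arg}(x,y)$ modulo $2\pi$. As already remarked before the statement, the choice of the principal argument is immaterial, since any other argument differs by a multiple of $2\pi$ and leaves the cosine unchanged.
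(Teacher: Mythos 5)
Your proposal is correct and takes essentially the same approach as the paper: both apply identity (\ref{re}) to the phase-rotated vector $e^{i\alpha}x$, bound the resulting real part by the modulus $|(x,y)|$, and observe that the bound becomes an identity precisely at the rotation $\alpha = -\operatorname{Arg}(x,y)$. Your step $\cos\bigl(\alpha + \operatorname{Arg}(x,y)\bigr) \le 1$ is just the polar-coordinate restatement of the paper's inequality $\operatorname{Re}(e^{i\alpha}u,v) \le |(e^{i\alpha}u,v)| = |(u,v)|$, so the two arguments coincide in substance.
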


\begin{proof} By a normalization, it is enough to consider
unit vectors $u$ and $v$.
 Let $\alpha$ be an arbitrary real number, and set 
 $t
 = \operatorname{Arg} (u,v)$, so
 $(u,v) = e^{it} r$ in polar form.
Using (\ref{re}) we obtain
$$ 1 - \frac12
\left\|e^{i\alpha} u - v \right\|^2
= \operatorname{Re} (e^{i\alpha} u,v) 
\le 
|(e^{i\alpha} u,v)|
$$
$$
 =
|(u,v)| = r = (e^{-it} u,v) =
\operatorname{Re} (e^{-it} u,v) = 1 - \frac12
\left\|e^{-it} u - v \right\|^2.
$$
\end{proof}

The preceding result can be regarded as a variational expression for
$|(x,y)|$, since it shows that this quantity can be obtained by
maximizing the left hand side of (\ref{stabCauchy}) over $\alpha$, or,
in other words, by minimizing $\left\|\frac{e^{i\alpha}x}{\|x\|}-\frac{y}{\|y\|}\right\|$ over $\alpha$.

\begin{corollary} Cauchy-Schwarz inequality.  For all vectors $x$ and $y$ in
a complex inner product space, we have  $|(x,y)| \le
\left\|x\right\| \|y\|$, with equality if and only if the vectors
are linearly dependent.
\end{corollary}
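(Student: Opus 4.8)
The plan is to read off both the inequality and the equality condition directly from the identity (\ref{stabCauchy}), which already expresses $|(x,y)|$ exactly in terms of an angular distance. The only preliminary issue is that (\ref{stabCauchy}) was stated for nonzero vectors, so I would first dispose of the degenerate case: if either $x$ or $y$ is the zero vector, then $(x,y)=0=\|x\|\|y\|$, so the inequality holds with equality, and the zero vector is linearly dependent with every vector, so the equivalence is satisfied trivially. Henceforth I assume that $x$ and $y$ are both nonzero.

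For the inequality itself, I would invoke the right-hand equality in (\ref{stabCauchy}),
$$|(x,y)| = \|x\|\|y\|\left(1 - \frac12 \left\|\frac{e^{-i\operatorname{Arg}(x,y)} x}{\|x\|} - \frac{y}{\|y\|}\right\|^2\right),$$
and observe that the squared norm appearing there is nonnegative. Dropping it yields $|(x,y)| \le \|x\|\|y\|$ at once. This is the crux, and it is essentially immediate once (\ref{stabCauchy}) is in hand; no further computation is needed.

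For the equality case, the same identity shows that $|(x,y)| = \|x\|\|y\|$ holds if and only if the squared norm vanishes, that is, if and only if
$$\frac{e^{-i\operatorname{Arg}(x,y)} x}{\|x\|} = \frac{y}{\|y\|}.$$
Rearranging, this says $y = \big(\|y\|/\|x\|\big)\, e^{-i \operatorname{Arg}(x,y)} x$, exhibiting $y$ as a nonzero scalar multiple of $x$; hence equality forces linear dependence. For the converse I would simply check that if $y = \lambda x$ with $\lambda \ne 0$, then $(x,y) = \overline{\lambda}\|x\|^2$, so $|(x,y)| = |\lambda|\|x\|^2 = \|x\|\big(|\lambda|\|x\|\big) = \|x\|\|y\|$.

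I do not anticipate a genuine obstacle, since all the analytic content has been absorbed into the earlier identities, and what remains is the bookkeeping of the zero-vector cases together with the routine verification of the converse. The one point I would treat with mild care is that the phase factor $e^{-i\operatorname{Arg}(x,y)}$ must be carried through the equality analysis, so that linear dependence is correctly recovered as ``$y$ is a complex scalar multiple of $x$'' rather than merely a positive real multiple of it.
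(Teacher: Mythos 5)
Your proof is correct and follows essentially the same route as the paper: both read the inequality and the forward equality implication directly off the right-hand identity in (\ref{stabCauchy}), after disposing of the zero-vector case. The only (immaterial) difference is in the converse direction, where the paper chooses $\alpha$ with $e^{i\alpha}u=v$ in the left-hand side of (\ref{stabCauchy}), while you verify $|(x,\lambda x)| = |\lambda|\,\|x\|^{2} = \|x\|\,\|\lambda x\|$ by direct computation; either is a one-line check.
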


\begin{proof} Of course, if one of the vectors $x$, $y$ is zero, the result
is trivial, so suppose otherwise and normalize, writing $u =  x/\|x\|$ and
$v = y/\|y\|$. From (\ref{stabCauchy}) we obtain,
first,
$|(u,v)| \le
1$, second, $e^{-i\operatorname{Arg} (u,v)} u = v $
 if $|(u,v)| =  1$, so equality implies linear dependency, and third,  
 $|(u,v)| =  1$  if $e^{i\alpha} u = v$ for some $\alpha\in \mathbb{R}$, so linear dependency implies equality.
 \end{proof}
 
\section{Strengthened Cauchy-Schwarz inequalities} 

Such inequalities, of the form $|(x,y)| \le
\gamma \left\|x\right\| \|y\|$ for some fixed $\gamma \in [0,1)$, are 
fundamental in the proofs of convergence of iterative, finite element methods in numerical
analysis. The basic result in the subject is the following theorem (see Theorem 2.1 and Remark 2.3 of \cite{EiVa}).

\begin{theorem}\label{scs}   Let $H$ be a Hilbert space, let $F\subset H$ be a closed subspace,
and let $V\subset H$ be a finite dimensional subspace. If $F\cap V = \{0\}$, then there exists a constant 
$\gamma = \gamma (V, F)\in [0,1)$
such that for every $x\in V$ and every $y\in F$, 
$$
|(x,y)| \le
\gamma \left\|x\right\| \|y\|.
$$
\end{theorem}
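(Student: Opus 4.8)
The plan is to recast the inequality as a strictly positive lower bound on the angular distance between the unit spheres of $V$ and $F$, and then to extract that bound from the compactness of the unit sphere of the finite-dimensional space $V$. If either subspace is $\{0\}$ the conclusion is trivial (one of $x,y$ is then forced to be $0$), so I would assume both are nonzero.

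First I would normalize. Since the asserted inequality is homogeneous of degree one in each of $x$ and $y$, it suffices to prove $|(u,v)| \le \gamma$ for unit vectors $u = x/\|x\| \in V$ and $v = y/\|y\| \in F$. Applying the equality in (\ref{stabCauchy}) to these unit vectors gives
\[
|(u,v)| = 1 - \frac12\left\| e^{-i\operatorname{Arg}(u,v)} u - v \right\|^2 .
\]
Because $V$ is a subspace, the vector $u' := e^{-i\operatorname{Arg}(u,v)} u$ again lies in $V$ and has unit norm, so $|(u,v)| = 1 - \frac12\|u' - v\|^2$ with $u'$ and $v$ living on the unit spheres $S_V := \{w \in V : \|w\| = 1\}$ and $S_F := \{w \in F : \|w\| = 1\}$. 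The whole problem thus reduces to bounding $\|u' - v\|$ away from $0$, uniformly over all admissible pairs.

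Accordingly I would set $\delta := \operatorname{dist}(S_V, S_F) = \inf\{\|w - z\| : w \in S_V,\ z \in S_F\}$ and take $\gamma := 1 - \delta^2/2$. The heart of the argument is to show $\delta > 0$, and this is exactly where the hypotheses enter. The sphere $S_V$ is closed and bounded in the finite-dimensional space $V$, hence compact; $S_F$ is closed in $H$, being the intersection of the closed subspace $F$ with the unit sphere of $H$; and $S_V \cap S_F = \emptyset$, since any common point would lie in $V \cap F = \{0\}$ while having norm one. The distance between a disjoint compact set and a closed set is strictly positive: were it zero, sequences $w_n \in S_V$ and $z_n \in S_F$ with $\|w_n - z_n\| \to 0$ would, after passing to a convergent subsequence $w_n \to w \in S_V$, force $z_n \to w$, so that $w \in S_F$ by closedness, contradicting disjointness. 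This yields $\delta > 0$, hence $\gamma < 1$; and since $\|u' - v\|^2 = 2 - 2|(u,v)| \le 2$ for the optimally rotated pair, one gets $\delta \le \sqrt{2}$ and therefore $\gamma \in [0,1)$.

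I expect the compactness step to be the only genuine obstacle, and it is unavoidable: in infinite dimensions two closed subspaces can meet only at the origin while their unit spheres lie at distance zero, so the finite-dimensionality of $V$ cannot be dropped. With $\delta > 0$ in hand, the chain $|(u,v)| = 1 - \frac12\|u' - v\|^2 \le 1 - \frac12\delta^2 = \gamma$ finishes the proof after undoing the normalization. I would also note that the argument uses the subspace structure only through the compactness of $S_V$, the closedness of $S_F$, and the invariance of $V$ under multiplication by $e^{-i\operatorname{Arg}(u,v)}$, which explains why the same reasoning persists when $V$ and $F$ are replaced by suitable cones.
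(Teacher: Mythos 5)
Your proposal is correct and follows essentially the same route as the paper: both reduce the statement to showing that the distance $\delta$ between the compact unit sphere of $V$ and the closed unit sphere of $F$ is strictly positive, and both then invoke the equality on the right-hand side of (\ref{stabCauchy}) together with the invariance of $V$ under multiplication by $e^{-i\operatorname{Arg}(x,y)}$ to get $|(x,y)| \le (1-\delta^2/2)\|x\|\|y\|$. The only cosmetic difference is that the paper extracts $\delta>0$ by minimizing the continuous distance-to-$S(F)$ function over the compact sphere $S(V)$, while you argue sequentially that a disjoint compact set and closed set are at positive distance; these are interchangeable.
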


There are, at least, two natural notions of angle between subspaces. To
see this, consider a pair of 2 dimensional
subspaces $V$ and $W$ in $\mathbb{R}^3$, in general position. They intersect
in a line $L$, so we may consider that they are parallel in the
direction of the subspace $L$, and thus the angle between them is
zero. This is the notion of angle relevant to the subject of 
Strengthened Cauchy-Schwarz inequalities.

Alternatively, we may disregard the common subspace $L$, and (in this
particular example) determine the angle between subspaces by choosing the minimal 
angle between their unit normals. Note however that
the two notions of angle suggested by the preceding example
coincide when the intersection of subspaces
is $\{0\}$ (cf. \cite{De} for more information on angles between
subspaces).

From the perspective of angles, or equivalently,  angular distances, what Theorem \ref{scs} states is the intuitively plausible
assertion  that the angular distance between
$V$ and $F$ is strictly positive
provided $F$ is  closed,
 $V$ is finite dimensional, and $F\cap V = \{0\}$.  Finite dimensionality
 of one of the subspaces is crucial, though: It is known that if both
$V$ and $F$ are infinite dimensional, the angular distance between
them can be zero, even if both subspaces are closed. 
 
Define the angular distance between $V$ and $F$ as
\begin{equation} \label{angdist}
\kappa(V, F) := \inf\{\|v - w\|: v\in V, w\in F, \mbox{ and  } \|v\| = \|w\|=1\}.
\end{equation}

The proof (by contradiction) of Theorem  \ref{scs} presented
in \cite{EiVa} is not difficult, but deals only with the case where
both $V$ and $F$ are finite dimensional. And it is certainly not as 
simple as 
the following

\begin{proof} If either $V=\{0\}$ or $F=\{0\}$ there is
nothing to show, so assume otherwise.
Let $S(V)$ be the unit sphere of the finite dimensional subspace $V$, and let $v\in S$. Denote
by $f(v)$ the distance from $v$ to the unit sphere $S(F)$ of $F$. Then
$f(v) > 0$ since $F$ is closed and $v\notin F$. Thus, $f$ achieves
a minimum value $\kappa > 0$ over the compact set $S(V)$. By the right hand side of formula (\ref{stabCauchy}),  for
every $x\in V\setminus \{0\}$ and every $y\in F\setminus \{0\}$ we have $|(x,y)| \le
(1 - \kappa^2/2) \left\|x\right\| \|y\|$.
\end{proof}

In concrete applications of the Strengthened Cauchy-Schwarz inequality, a good deal of effort goes into estimating the size of $\gamma = \cos \theta$, where
$\theta$ is the angle between  subspaces appearing in the discretization
schemes. Since we also have
$\gamma = 1 - \kappa^2/2$, this equality can provide an alternative way of
estimating $\gamma$, via the angular distance $\kappa$ rather than the angle.

Next we state a natural extension of Theorem \ref{scs}, to which the
same proof applies (so we will not repeat it). Consider two nonzero vectors
$u$, $v$ in a real inner product space $E$, and let $S$ the unit circumference in the plane spanned by these vectors. The angle between them is just the
length of the smallest arc of $S$ determined by $u/\|u\|$ and $v/\|v\|$. So to speak about angles, or angular
distances, we only need to be able to multiply nonzero vectors $x$ by  positive
scalars $\lambda = 1/\|x\|$. This suggests that the natural setting
for Theorem \ref{scs} is that of cones, rather than vector subspaces.
Recall that $C$ is a {\em cone} in a vector space over a field containing
the real numbers  if for every $x\in C$ and every
$\lambda > 0$ we have $\lambda x\in C$. In particular, every vector
subspace is a cone. If $C_1$ and $C_2$ are  cones in a 
Hilbert space,  the angular distance between them can be defined
exactly as before:
\begin{equation} \label{angdistc}
\kappa(C_1, C_2) := \inf\{\|v - w\|: v\in C_1, w\in C_2, \mbox{ and  } \|v\| = \|w\|=1\}.
\end{equation}

\begin{theorem}\label{scsc}   Let $H$ be a Hilbert space with unit sphere
$S(H)$, and let $C_1, C_2\subset H$ be (topologically) closed cones, such that $C_1\cap S(H)$
is a norm compact set.  If $C_1\cap C_2 = \{0\}$, then there exists a constant 
$\gamma = \gamma (C_1, C_2)\in [0,1)$
such that for every $x\in C_1$ and every $y\in C_2$, 
$$
|(x,y)| \le
\gamma \left\|x\right\| \|y\|.
$$
\end{theorem}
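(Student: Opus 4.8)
The plan is to run the proof of Theorem \ref{scs} essentially verbatim, with the cones $C_1,C_2$ in place of the subspaces $V,F$, and with the hypothesis that $C_1\cap S(H)$ is norm compact taking over the role previously played by the finite dimensionality of $V$. As in that argument, everything is arranged so as to manufacture a single strictly positive angular distance $\kappa=\kappa(C_1,C_2)$, after which the right-hand equality in (\ref{stabCauchy}) converts it into the desired constant $\gamma=1-\kappa^2/2\in[0,1)$.

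First I would dispose of the trivial cases $C_1=\{0\}$ and $C_2=\{0\}$, in which there is nothing to prove, and otherwise record that $C_2\cap S(H)\neq\emptyset$: any nonzero $w\in C_2$ may be normalized, and since $C_2$ is a cone, $w/\|w\|=(1/\|w\|)w\in C_2\cap S(H)$. For each $v\in C_1\cap S(H)$ I then set $f(v)$ equal to the distance from $v$ to $C_2\cap S(H)$, just as in Theorem \ref{scs}. The target set $C_2\cap S(H)$ is closed, being the intersection of the closed cone $C_2$ with the closed sphere $S(H)$, and it omits $v$, because $v\in C_1$ with $\|v\|=1$ forces $v\neq 0$, so $v\notin C_2$ by $C_1\cap C_2=\{0\}$. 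Hence $f(v)>0$ for every $v$. Since $f$ is $1$-Lipschitz, hence continuous, it attains a minimum $\kappa$ over the compact set $C_1\cap S(H)$, and that minimum is strictly positive because $f$ is everywhere positive; by construction $\kappa$ is the angular distance (\ref{angdistc}).

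It remains to feed $\kappa$ into (\ref{stabCauchy}). Given $x\in C_1\setminus\{0\}$ and $y\in C_2\setminus\{0\}$, I normalize to $u=x/\|x\|\in C_1\cap S(H)$ and $v=y/\|y\|\in C_2\cap S(H)$, and read the value of $|(x,y)|$ off the right-hand equality of (\ref{stabCauchy}); bounding the angular quantity there below by $\kappa$ gives $|(x,y)|\le(1-\kappa^2/2)\|x\|\|y\|$, so $\gamma:=1-\kappa^2/2$ works. The step I expect to need the most care is precisely this last one, because the vector occurring in (\ref{stabCauchy}) is the phase-rotated $e^{-i\operatorname{Arg}(x,y)}u$ rather than $u$ itself; for a subspace this rotated unit vector lies again in the subspace and so is automatically controlled by $\kappa$, and the corresponding point in the cone setting is what I would verify explicitly before reading off the bound.
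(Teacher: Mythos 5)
You are following the paper's own plan --- the paper states that the proof of Theorem \ref{scs} applies to Theorem \ref{scsc} without change --- and the step you flag at the end, but do not carry out, is precisely where that plan fails. The constant $\kappa$ you construct controls $\|u'-v'\|$ only for $u'\in C_1\cap S(H)$ and $v'\in C_2\cap S(H)$, while the right-hand equality in (\ref{stabCauchy}) expresses $|(x,y)|$ through $\bigl\|e^{-i\operatorname{Arg}(x,y)}u-v\bigr\|$. A cone is stable only under multiplication by \emph{positive} scalars, so the rotated vector $e^{-i\operatorname{Arg}(x,y)}u$ (in a real space this is $-u$ whenever $(x,y)<0$) need not belong to $C_1$, and $\kappa$ gives no information about its distance to $C_2\cap S(H)$. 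This is not a repairable technicality: the theorem as stated is false. Take a unit vector $e\in H$ and let $C_1=\{te:t\ge0\}$, $C_2=\{-te:t\ge0\}$. Both are closed (even convex) cones, $C_1\cap S(H)=\{e\}$ is norm compact, and $C_1\cap C_2=\{0\}$; yet $(e,-e)=-1$, so $|(e,-e)|=1=\|e\|\,\|{-e}\|$ and no $\gamma<1$ can exist. In this example your $\kappa$ equals $2$, and $1-\kappa^2/2=-1$ cannot bound the nonnegative quantity $|(x,y)|/(\|x\|\|y\|)$.

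What your compactness argument does prove are the following corrected statements. (a) Under the stated hypotheses, the conclusion must be weakened to $\operatorname{Re}(x,y)\le\gamma\|x\|\|y\|$ with $\gamma=\max\{0,\,1-\kappa^2/2\}\in[0,1)$: identity (\ref{re}) involves only the unrotated unit vectors $u=x/\|x\|\in C_1\cap S(H)$ and $v=y/\|y\|\in C_2\cap S(H)$, whose mutual distance genuinely is at least $\kappa$, so your argument applies verbatim. (b) To keep the conclusion $|(x,y)|\le\gamma\|x\|\|y\|$, the hypothesis must be strengthened to $e^{i\theta}C_1\cap C_2=\{0\}$ for every $\theta\in\mathbb{R}$ (in a real space, to $C_1\cap C_2=(-C_1)\cap C_2=\{0\}$); then one minimizes the continuous function $(\theta,u)\mapsto\operatorname{dist}\bigl(e^{i\theta}u,\,C_2\cap S(H)\bigr)$ over the compact set $[0,2\pi]\times\bigl(C_1\cap S(H)\bigr)$ to get $\kappa'>0$, and the right-hand side of (\ref{stabCauchy}) yields $\gamma=1-(\kappa')^2/2$, which lies in $[0,1)$ because $\kappa'\le\sqrt2$. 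It is worth noting that the example following Theorem \ref{scsc} in the paper satisfies the stronger hypothesis in (b), since both cones there are invariant under $v\mapsto-v$, which is why that example does not expose the defect; and that the H\"older analogue, Theorem \ref{sh}, is immune to it, because there the pairing involves the nonnegative functions $|f|$ and $|g|$, so no rotation ever enters.
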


\begin{example} Let $H=\mathbb{R}^2$, 
$C_1 =\{(x,y) \in \mathbb{R}^2: x= -y\}$ and $C_2 =\{(x,y) \in \mathbb{R}^2: x y\ge 0\}$, that is, $C_1$ is the one dimensional subspace with slope $-1$
and $C_2$ the union of the first and third quadrants. Here we can explicitly
see that $\gamma (C_1, C_2) = \cos (\pi/4) = 1/\sqrt2$. 
However, if $C_2$ is extended to
a vector space $V$, then the condition $C_1\cap V = \{0\}$ no longer holds
and  $\gamma (C_1, V)  = 1$. So stating the result in terms of cones rather
than vector subspaces does cover new, nontrivial cases.
\end{example}

\section{A Strengthened H\"older inequality} 

For $1 < p < \infty$, it is possible to give an $L^p-L^q$ version of
the Strengthened Cauchy-Schwarz inequality. Here $q:= p/(p - 1)$ denotes the
conjugate exponent of $p$. We want to find suitable conditions on 
$C_1\subset L^p$ and $C_2\subset L^q$ so that  
there exists a constant 
$\gamma = \gamma (C_1, C_2)\in [0,1)$
with 
$
\|fg\|_1 \le
\gamma \left\|f\right\|_p \|g\|_q
$
for every $f\in C_1$ and every $g\in C_2$. An obvious difference between
the H\"older and the Cauchy-Schwarz cases is that in the
pairing $(f, g): = \int f\overline{g}$, the functions $f$ and $g$ belong to different
spaces (unless $p = q =2$). This means that the hypothesis $C_1\cap C_2 = \{0\}$ needs to be modified. A second obvious difference is that
H\"older's inequality deals actually with $|f|$ and $|g|$ rather
than with $f$ and $g$. So when finding angular distances we will 
also deal with $|f|$ and $|g|$. Note that $f\in C_i$ does not necessarily
imply that $|f|\in C_i$ (consider, for instance, the second quadrant in $\mathbb{R}^2$). 

We make standard nontriviality assumptions
on measure spaces $(X, \mathcal{A}, \mu)$: $X$ contains at least one point
and the (positive) measure $\mu$ is not identically zero. We write
$L^p$ rather than $L^p (X, \mathcal{A}, \mu)$.

To compare cones in different $L^p$ spaces, we map them into $L^2$ via
the Mazur map. Let us write $\operatorname{ sign } z = e^{i\theta}$ when 
$z = r e^{i\theta} \ne 0$, and $\operatorname{ sign } 0 = 1$ (so 
$|\operatorname{ sign } z| = 1$ always).
The Mazur map  
 $\psi_{r,s} : L^r\to L^s$ is defined first on the
unit sphere $S (L^r)$ by $\psi_{r,s} (f) := |f|^{r/s} \operatorname{ sign }f$, and then extended
to the rest of $L^r$ by homogeneity (cf. \cite{BeLi}, pp. 197--199 for additional 
information on the Mazur map). More precisely, 
$$
\psi_{r,s} (f) := \|f\|_r \psi_{r,s} (f/\|f\|_r) = \|f\|_r^{1 - r/s} |f|^{r/s} \operatorname{ sign }f.
$$
By definition, if $\lambda > 0$ then $\psi_{r,s} (\lambda f) =
\lambda \psi_{r,s} (f)$. This entails that if $C\subset L^r$ is a
cone, then $\psi_{r,s} (C)\subset L^s$ is a cone. Given a subset
$A\subset L^r$, we denote by $|A|$ the set $|A| :=\{|f|: f\in A\}$.
Observe that if $A$ is a cone then so is $|A|$. 

\begin{theorem}\label{sh} Let $1 < p < \infty$ and denote by $q:= p/(p - 1)$  its
conjugate exponent. Let
$C_1\subset L^p$ and $C_2\subset L^q$ be cones, let $S(L^p)$
stand for the unit sphere of $L^p$
and let $\overline{|C_1|}$  and $\overline{|C_2|}$ denote the topological
closures of $|C_1|$  and $|C_2|$. If 
 $\overline{|C_1|}\cap S(L^p)$ is norm compact, and
 $\psi_{p,2} (\overline{|C_1|}) \cap \psi_{q,2} (\overline{|C_2|}) = \{0\}$, then 
  there exists a constant 
$\gamma = \gamma (C_1, C_2)\in [0,1)$
such that for every $f\in C_1$ and every $g\in C_2$, 
\begin{equation}\label{sth}
\|fg\|_1 \le
\gamma \left\|f\right\|_p \|g\|_q.
\end{equation}
\end{theorem}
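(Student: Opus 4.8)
The plan is to reduce the strengthened H\"older inequality to the strengthened Cauchy--Schwarz inequality for cones in the Hilbert space $L^2$ (Theorem \ref{scsc}), using the Mazur maps as the bridge. First I would record the key metric property of the Mazur map: a direct computation gives $\|\psi_{r,s}(f)\|_s = \|f\|_r$ for every $f$, so $\psi_{r,s}\colon L^r\to L^s$ is norm preserving, and, being a bijection with continuous inverse $\psi_{s,r}$, it is a homeomorphism carrying $S(L^r)$ onto $S(L^s)$. Consequently $D_1:=\psi_{p,2}(\overline{|C_1|})$ and $D_2:=\psi_{q,2}(\overline{|C_2|})$ are closed cones in $L^2$; moreover $D_1\cap S(L^2)=\psi_{p,2}\big(\overline{|C_1|}\cap S(L^p)\big)$ is the continuous image of a norm compact set, hence norm compact; and $D_1\cap D_2=\{0\}$ is exactly the standing hypothesis. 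Thus Theorem \ref{scsc}, applied in $L^2$, furnishes a constant $\gamma'\in[0,1)$ with $|(\phi,\chi)|\le\gamma'\,\|\phi\|_2\|\chi\|_2$ for all $\phi\in D_1$ and $\chi\in D_2$.

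Next I would translate the H\"older pairing into an $L^2$ inner product. Since (\ref{sth}) is homogeneous and trivial when $f$ or $g$ vanishes, one may assume $\|f\|_p=\|g\|_q=1$. Writing $a:=|f|^p$ and $b:=|g|^q$, both are probability densities, and $\phi:=\psi_{p,2}(|f|)=a^{1/2}$ and $\chi:=\psi_{q,2}(|g|)=b^{1/2}$ are unit vectors in $L^2$; as $|f|\in|C_1|$ and $|g|\in|C_2|$, we have $\phi\in D_1$ and $\chi\in D_2$. The point of the construction is the pair of identities
\begin{equation*}
\|fg\|_1=\int a^{1/p}b^{1/q}=G\big(\tfrac1p\big),\qquad (\phi,\chi)=\int a^{1/2}b^{1/2}=G\big(\tfrac12\big),
\end{equation*}
where $G(s):=\int a^{s}b^{1-s}$ for $s\in[0,1]$. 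In particular equality in H\"older corresponds exactly to the two Mazur images coinciding, and the previous paragraph gives $G(1/2)=(\phi,\chi)\le\gamma'$.

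The decisive step is a quantitative interpolation turning the $L^2$ gap $\gamma'<1$ into a H\"older gap $\gamma<1$. By H\"older's inequality the function $G$ is log-convex on $[0,1]$, and $G(0)=\int b=1=\int a=G(1)$. Since $1/p$ lies between $1/2$ and one of the endpoints $0,1$, convexity of $\log G$ yields $G(1/p)\le G(1/2)^{\beta}$ with $\beta:=\min\{2/p,\,2/q\}\in(0,1]$; combining this with $G(1/2)\le\gamma'$ gives $\|fg\|_1\le(\gamma')^{\beta}\,\|f\|_p\|g\|_q$, so $\gamma:=(\gamma')^{\beta}\in[0,1)$ works. I expect the main obstacle to be precisely this interpolation estimate: one must check in each of the cases $p<2$, $p=2$, $p>2$ that the interpolation exponent $\beta$ is strictly positive (so that $\gamma'<1$ forces $\gamma<1$), and that the degenerate situation $G(1/2)=0$, meaning $a$ and $b$ have essentially disjoint supports, is harmless. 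The homeomorphism and norm-preservation properties of the Mazur map, needed to transfer closedness and compactness to $D_1$ and $D_2$, form the secondary point requiring care.
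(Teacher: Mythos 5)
Your proof is correct, and it takes a genuinely different route from the paper's at the decisive final step. Both arguments transport the cones into $L^2$ via the Mazur maps and establish that the images are closed cones with $\psi_{p,2}(\overline{|C_1|})\cap S(L^2)$ norm compact; your appeal to the homeomorphism and norm-preservation properties of $\psi_{r,s}$ is sound, with the same care the paper spells out (the Mazur map is uniformly continuous only on bounded sets, but Cauchy sequences are bounded, so closedness does transfer). From there the paths diverge: the paper does not quote Theorem \ref{scsc}, but re-runs the distance/compactness argument to get $\kappa>0$ and then finishes by invoking Theorem \ref{betterhold}, the stability version of H\"older's inequality proved in \cite{A}, which bounds the normalized quantity $\|fg\|_1$ by $1-\kappa^2/M$ with $M=\max\{p,q\}$. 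You instead use Theorem \ref{scsc} as a black box to obtain $\gamma'<1$ in $L^2$, and you replace the appeal to \cite{A} by an elementary interpolation: log-convexity of $G(s)=\int a^s b^{1-s}$ together with $G(0)=G(1)=1$ gives $\|fg\|_1=G(1/p)\le G(1/2)^{\beta}\le(\gamma')^{\beta}$, and your case check that $\beta=\min\{2/p,2/q\}=2/M>0$, as well as the degenerate case $G(1/2)=0$, are handled correctly. The trade-off: the paper's route is shorter given \cite{A} and yields the explicit additive constant $1-\kappa^2/M$; yours is self-contained (only classical H\"older interpolation beyond Theorem \ref{scsc}) and is in fact slightly sharper, since the proof of Theorem \ref{scsc} gives $\gamma'=1-\kappa^2/2$ for the same $\kappa$, and Bernoulli's inequality shows $(1-\kappa^2/2)^{2/M}\le 1-\kappa^2/M$ --- so your interpolation step even recovers inequality (\ref{bonhold}) in a multiplicative form as a byproduct.
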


We use  the part of \cite[Theorem 2.2]{A} given next.

\begin{theorem}\label{betterhold}  Let $1 < p < \infty$, let $q = p/(p-1)$ be its
conjugate exponent, and let $M = \max\{p, q\}$. 
If $f\in L^p $, $g\in L^q$, and $\|f\|_p, \|g\|_q > 0$, then
\begin{equation}\label{bonhold}
\|fg\|_1 \le  \|f\|_p\|g\|_q \left(1 - \frac1M
\left\|\frac{|f|^{p/2}}{\|f\|_p^{p/2}}-\frac{|g|^{q/2}}{\|g\|_q^{q/2}}\right\|_2^2\right).
\end{equation}
\end{theorem}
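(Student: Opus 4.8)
The plan is to reduce the integral inequality (\ref{bonhold}) to a single pointwise inequality and then integrate. First I would use homogeneity to normalize: both sides of (\ref{bonhold}) are homogeneous of degree one in $f$ and in $g$, while the expression inside the $L^2$-norm is invariant under scaling $f$ and $g$ by positive constants (since $|\lambda f|^{p/2}/\|\lambda f\|_p^{p/2} = |f|^{p/2}/\|f\|_p^{p/2}$). Hence it suffices to treat the case $\|f\|_p = \|g\|_q = 1$, in which $\int |f|^p\, d\mu = \int |g|^q\, d\mu = 1$ and $\frac1p + \frac1q = 1$. Writing $s = |f(x)|$ and $t = |g(x)|$ pointwise, the goal becomes $\int st\, d\mu \le 1 - \frac1M \int (s^{p/2} - t^{q/2})^2\, d\mu$, so everything will follow by integrating over $X$ the pointwise strengthened Young inequality
\[
st \le \frac{s^p}{p} + \frac{t^q}{q} - \frac1M\left(s^{p/2} - t^{q/2}\right)^2, \qquad s, t \ge 0.
\]

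To prove this pointwise inequality I would first observe that it is symmetric under the exchange $(s,p)\leftrightarrow(t,q)$, so there is no loss of generality in assuming $p \ge q$, i.e. $p \ge 2 \ge q$, whence $M = p$. Then I would substitute $a = s^{p/2}$ and $b = t^{q/2}$, so that $s^p = a^2$, $t^q = b^2$, $st = a^{2/p} b^{2/q}$, and $(s^{p/2} - t^{q/2})^2 = (a - b)^2$. Expanding the right-hand side and using $\frac1q - \frac1p = 1 - \frac2p$ collapses it to $(1 - \tfrac2p)\,b^2 + \tfrac2p\,ab$. Setting $r := 2/p \in (0,1]$ and noting $2/q = 2 - r$, the claim takes the compact form $a^r b^{2-r} \le (1-r)\,b^2 + r\,ab$.

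The remaining step is elementary and is where the heart of the argument lies. When $b = 0$ both sides vanish (as $2 - r > 0$); when $b > 0$, dividing by $b^2$ and setting $z = a/b \ge 0$ reduces the claim to the Bernoulli-type inequality $z^r \le (1-r) + r z$ for $r \in (0,1]$. This is immediate from the concavity of $z \mapsto z^r$ on $[0,\infty)$: the affine function $(1-r) + rz$ is exactly the tangent line to $z^r$ at $z = 1$, and a concave curve lies below each of its tangents (for $r = 1$, i.e. $p = 2$, the two sides coincide). I expect the main obstacle to be purely the bookkeeping of the reduction: choosing the substitution $a = s^{p/2}$, $b = t^{q/2}$ and verifying that the constant $1/M = 1/p$ (valid once $p \ge q$) is precisely what makes the algebra collapse to the sharp tangent-line inequality rather than to some weaker bound. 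Finally I would integrate the established pointwise inequality over $X$ and undo the normalization to recover (\ref{bonhold}).
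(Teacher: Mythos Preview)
Your argument is correct. The normalization is clean, the substitution $a=s^{p/2}$, $b=t^{q/2}$ does collapse the right-hand side exactly as you say, and the final concavity step $z^r\le(1-r)+rz$ for $r\in(0,1]$ is the standard tangent-line inequality. Integrating the pointwise bound and using $\int|f|^p=\int|g|^q=1$ gives the result.

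As for comparison: the present paper does \emph{not} actually prove this theorem. It is quoted from the author's earlier paper \cite{A} (Theorem~2.2 there) and merely invoked as a tool in the proof of Theorem~\ref{sh}; the paper also notes that an alternative proof with the weaker constant $M=p+q$ appears in \cite{A2}. So you have supplied a complete self-contained proof where the paper only cites one. Your route---reduce to a pointwise sharpening of Young's inequality and recognize that sharpening as the concavity of $z\mapsto z^r$---is essentially the natural one and is in the same spirit as the original source, though your bookkeeping via the single parameter $r=2/p$ is particularly tidy.
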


A different proof of inequality (\ref{bonhold})  (with the slightly weaker
constant $M = p + q$, but sufficient for the purposes of this note) can be found in \cite{A2}. Next we prove Theorem \ref{sh}.

\begin{proof} If either $C_1=\{0\}$ or $C_2=\{0\}$ there is
nothing to show, so assume otherwise. Note that since $|C_1|$ and
$|C_2|$ are cones, the same happens with their topological closures.
The cones $\psi_{p,2} (\overline{|C_1|})$ and $\psi_{q,2} (\overline{|C_2|})$
are also closed, as the following argument shows:
The Mazur maps $\psi_{r,s}$ are uniform homeomorphisms
between closed balls, and also between spheres, of any fixed (bounded) radius  (cf. 
\cite[Proposition 9.2, p. 198]{BeLi}, and the paragraph before the
said proposition). In particular, if $\{f_n\}$ is a Cauchy sequence
in $\psi_{q,2} (\overline{|C_2|})$ (for instance) then it is a bounded 
sequence in $L^2$,
so $\psi_{q,2}^{-1} = \psi_{2,q}$ maps it to a Cauchy sequence in
$\overline{|C_2|}$, with limit, say, $h$. Then $\lim_n f_n = \psi_{q,2} (h) 
\in \psi_{q,2} (\overline{|C_2|})$. Likewise,
$\psi_{p,2} (\overline{|C_1|})$ is closed. 

The rest of the proof
proceeds as before. Let $v\in \psi_{p,2} (\overline{|C_1|})\cap S(L^2)$
and denote
by $F(v)$ the distance from $v$ to  $\psi_{q,2} (\overline{|C_2|})\cap S(L^2)$. Then
$F(v) > 0$, so $F$ achieves
a minimum value $\kappa > 0$ over the compact set $\psi_{p,2} (\overline{|C_1|})\cap S(L^2)$, and now (\ref{sth}) follows
from  (\ref{bonhold}). 
\end{proof}


\begin{thebibliography}{WWW}

\bibitem[A]{A} Aldaz, J. M. {\em A stability version of H\"older's inequality},
Journal of Mathematical Analysis and Applications, Volume 343, Issue 2 (2008) 842-852. doi:10.1016/j.jmaa.2008.01.104. Also available at the Mathematics ArXiv: arXiv:math.CA/0710.2307.

\bibitem[A2]{A2} Aldaz, J. M. {\em Selfimprovemvent of the inequality between arithmetic and geometric means}, submitted.

\bibitem[BeLi]{BeLi} Benyamini, Yoav; Lindenstrauss, Joram {\em Geometric nonlinear functional analysis}. Vol. 1. American Mathematical Society Colloquium Publications, 48. American Mathematical Society, Providence, RI, 2000.



\bibitem[DR]{DR} De Rossi, Alessandra {\em A strengthened Cauchy-Schwarz inequality for
biorthogonal wavelets.} Math. Inequal. Appl. 2 (1999), no. 2,
263--282.


\bibitem[DRRo1]{DRRo1} De Rossi, Alessandra; Rodino, Luigi {\em A strengthened Cauchy-Schwarz
inequality for bidimensional spline-wavelets.} Inequalities, 2001
(Timisoara). Math. Inequal. Appl. 5 (2002), no. 3, 387--395.


\bibitem[DRRo2]{DRRo2} De Rossi, Alessandra; Rodino, Luigi {\em Strengthened Cauchy-Schwarz inequality
for biorthogonal wavelets in Sobolev spaces.} J. Math. Anal. Appl.
299 (2004), no. 1, 49--60.

\bibitem[De]{De} Deutsch, Frank {\em Best approximation in inner product spaces.}
CMS Books in Mathematics/Ouvrages de Math\'ematiques de la SMC, 7. Springer-Verlag, New York, 2001.

\bibitem[EiVa]{EiVa} Eijkhout, Victor; Vassilevski, Panayot 
{\em The role of 
the strengthened Cauchy-Buniakowskii-Schwarz inequality 
in multilevel methods.} SIAM Rev. 33 (1991), no. 3, 405--419. 


\end{thebibliography}
\end{document}